\documentclass{article}

\usepackage{amssymb, latexsym,pdfsync,amsmath,amsthm, ulem,hyperref,graphicx}
\usepackage{fullpage}
\usepackage{pgf,tikz}
\usepackage{mathrsfs}
\usepackage{mathtools}
\usetikzlibrary{arrows}
\newtheorem{theorem}{Theorem}
\newtheorem{corollary}{Corollary}
\newtheorem{lemma}{Lemma}

\theoremstyle{definition}
\newtheorem{definition}{Definition}

\newtheorem{example}{Example}

\newcommand{\SSS}{\mathbb{S}}

\newcommand{\FF}{\mathbb{F}}

\newcommand{\Fq}{\mathbb{F}_q}

\newcommand{\Fqm}{\mathbb{F}_{q^m}}

\def\Fq{{\mathbb{F}}_q}
\def\Fqs{{\mathbb{F}}_{q^s}}

\def\im{\mathrm{im}}

\def\PG{\mathrm{PG}}

\def\dim{\mathrm{dim}}

\newcommand{\npmatrix}[1]{\left( \begin{matrix} #1 \end{matrix} \right)}

\newcommand{\rank}{\mathrm{rank}}

\begin{document}
\title{New invariants for rank metric codes, with 
 applications to the classification of rank two semifields of order 256}
\author{Jack Gilchrist, Stefano Lia, Arani Paul, and John Sheekey}
\maketitle

\begin{abstract}
    In this paper we completely classify semifields of order $2^8=256$ containing a nucleus of order $2^4=16$. We introduce new invariants for semifields, and apply new computational techniques for calculating old invariants. Together these make the computational classification significantly quicker.
\end{abstract}

\section{Introduction}

A {\it finite semifield} is a finite division algebra in which multiplication is not necessarily associative. Thus they are natural generalisations of finite fields. Dickson \cite{Dickson1906} showed that there exist non-trivial finite semifields. Albert \cite{Albert1960} and Knuth \cite{Knuth1965} deepened the study of semifields, demonstrating equivalences between semifields and certain types of projective planes and hypercubes (threefold tensors). Since then semifields have proved useful in a variety of settings; we refer to \cite{LaPo} for further background and history.

It is well known that to each semifield $\SSS$ one can associate a {\it semifield spread set} $C(\SSS)$; an additively closed set of matrices in which the difference of any pair of matrices is invertible. By an appropriate choice of parameters, a semifield spread set is an $\Fq$-subspace of $M_n(\Fqs)$ of dimension $ns$ over $\Fq$ in which every nonzero element is invertible. In this setting we can view $C(\SSS)$ as a {\it rank metric code}, specifically an {\it MRD code} in $M_n(\Fqs)$ with minimum distance $n$. Semifields which give rise to spread sets in this space are those who have {\it right nucleus} containing $\Fqs$. 

The natural notion of equivalence for semifields is that of {\it isotopism}; this notion coincides with the equivalence of the corresponding spread sets as matrix rank metric codes. This fact has been exploited in order to perform computational classifications of semifields of small orders; see \cite{Rua2009}, \cite{Rua2011b}, \cite{Rua2012a}, \cite{Rua2012b}. 

Further advances for general semifields of larger orders appears to be out of reach at present. Therefore attention turns to semifields with added assumptions; for example, symplectic semifields \cite{LaShSymp}, \cite{MaPe}, or semifields of small dimension over a nucleus \cite{MaPoNuc}. When both these assumptions are combined we consider {\it rank two commutative semifields}, which have numerous connections to other topics in finite geometry. See \cite{BaBlLark2} for these connections, and a remarkable classification result.

Strong classification results on semifields of dimension $2$ over a nucleus $\Fqs$ have been achieved for for arbitrary $q$ for $s=2$ in \cite{CaPoTr} and $s=3$ in a series of papers including \cite{MaPoTr}, \cite{JoMaPoTr}. 

In this paper we introduce some new invariants for semifields and rank-metric codes which can be used to more efficiently determine inequivalence. We demonstrate their utility by classifying semifields of order $2^8$ containing a nucleus of order $2^4$, or equivalently MRD codes in $M_2(\FF_{2^4})$ with minimum distance $2$.

\section{Rank metric codes and spread sets}

\subsection{Matrix rank metric codes}

For the purposes of this paper, a {\it matrix rank metric code} is an $\Fq$-subspace of $M_{\ell \times n}(\Fqs)$, equipped with the distance function 
\[
d(A,B) := \rank(A-B).
\]
We will mostly work with the case $\ell=n$, and write $M_n(\Fqs) := M_{\ell\times n}(\Fqs)$.

Two codes $C,C'$ are said to be {\it equivalent} if there exist invertible matrices $X,Y\in M_n(\Fqs)$ and a field automorphism $\rho$ of $\Fqs$ such that
\[
C' = \{XA^\rho Y:A\in C\},
\]
where $A^\rho$ is the matrix obtained from $A$ by applying the automorphism $\rho$ entrywise. Throughout we will let $\sigma$ denote the Frobenius automorphism, that is, the map $\sigma:x\rightarrow x^q$. 

Each matrix of $M_n(\Fqs)$ naturally defines an $\Fqs$-endomorphism from $(\Fqs)^n$ to itself. We can also regard such a map as an $\Fq$-endomorphism from $(\Fq)^{ns}$ to itself; we will make this explicit later.

The problem of determining the equivalence or otherwise of two codes is well known to be a difficult problem; indeed, many public key cryptosystems rely on the computational hardness of this problem. Therefore any {\it invariant}, that is, any (easily computable) function from the set of codes to some target set which takes the same value at any two equivalent codes, holds great value in both cryptographic and classification problems.

\subsection{Vector rank metric codes and $q$-systems}\label{ssec:qsys}

A {\it vector rank metric code} is an $\Fqs$-subspace of $(\Fqs)^N$ equipped with the distance function
\[
d(u,v) := \dim_{\Fq}\langle u_i-v_i:i\rangle_{\Fq}.
\]
By picking an $\Fq$-basis for $\Fqs$, we can identify such a code with an $\Fq$-subspace of $M_{s\times N}(\Fq)$, in which case the distance function becomes the rank of the difference of two matrices, as for matrix rank metric codes.

Two vector rank metric codes $D,E$ are said to be {\it equivalent} if there exists an invertible matrix $Q\in M_N(\Fq)$ and a field automorphism $\rho$ of $\Fqs$ such that
\[
E = \{v^\rho Q:v\in D\}.
\]
It is much easier to determine the equivalence or otherwise of two vector rank metric codes; see for example \cite{couvreur2020hardness}. 

A correspondence between vector rank metric codes and {\it $q$-systems} has been developed in recent years, leading to fruitful rewards in both areas; see for example \cite{alfarano2022linear}, \cite{randrianarisoa2020geometric}, \cite{marino2023evasive}. A $q$-system is an $\Fq$-subspace of an $\Fqs$-vector space $(\Fqs)^k$, with equivalence defined under the natural action of $\Gamma L(k,\Fqs)$. Such objects have been studied for the past couple of decades in finite geometry through the {\it linear sets} they define in $\PG(k-1,q^s)$. The correspondence is realised by taking a basis for $D$, creating a matrix $G$ whose rows are this basis (a {\it generator matrix} for $D$), and associating to $G$ the $\Fq$-subspace $U$ generated by its columns. Conversely, choosing an $\Fq$-basis for $U$ and forming a matrix $G$ with these basis elements as its columns, we obtain a vector rank metric code as the $\Fqs$-subspace generated by its rows. While some care should be taken around degeneracy (where the dimensions of either $D$ or $U$ may not equal $k$ or $N$ respectively), in general we can pass freely between these two settings.

Consider an $\Fq$-subspace $C$ of $M_{\ell\times n}(\Fqs)$ of $\Fq$-dimension $k$. Then we can define a vector rank metric code $D$ in $(\Fqs)^k$ of dimension at most $\ell n$ as follows. We identify $M_{\ell\times n}(\Fqs)$ with $(\Fqs)^{\ell n}$ as $\Fq$-vector spaces, for example by identifying a matrix $A$ with its vectorisation $\mathrm{vec}(A)$ in column-major order. Then the image of $C$ is naturally a $q$-system, which defines by the above correspondence a vector rank metric code of length $k$ and dimension at most $\ell n$.

\subsection{Semifield spread sets}

Let $\SSS$ be a presemifield with multiplication $\star$. Then the maps
\begin{align*}
L_x&:y\mapsto x\star y\\
R_y&:x\mapsto x\star y
\end{align*}
are known as the {\it maps of  left-} and {\it right-multiplication} respectively. Each are additive, and invertible for nonzero $x,y$.

\begin{definition}
The {\it spread set} of a semifield $\SSS$ is denoted by $C(\SSS)$ and defined as 
\[
\{R_y:y\in \SSS\}.
\]
\end{definition}

It is well-known that there exist fields $\Fq$, $\Fqs$, and a positive integer $n$ such that $C(\SSS)$ can be naturally embedded into $M_n(\Fqs)$ as an $\Fq$-subspace in which every nonzero element is invertible. We refer to \cite{Willems} for a detailed exposition. In this way we see $C(\SSS)$ as an $\Fq$-linear MRD code in $M_n(\Fqs)$.

\section{Vector rank-metric codes from spread sets}\label{sec:vmrss}

As outlined in the previous section, we may construct a matrix $G$ from an $\Fq$-basis of $C(\SSS)$ to view $C(\SSS)$ as a $q$-system, and hence obtain an $\Fqs$-linear vector rank metric code $D(\SSS)$ of dimension at most $n^2$ and length $ns$. Choosing different bases for $C(\SSS)$ leads to equivalent codes, so we abuse notation and omit this choice.

\begin{lemma}
    If two semifield spread sets $C(\SSS)$ and $C(\SSS')$ are equivalent, then the vector rank-metric codes $D(\SSS)$ and $D(\SSS')$ are equivalent.
\end{lemma}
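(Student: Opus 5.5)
We track how each ingredient of an equivalence $C(\SSS')=\{XA^\rho Y:A\in C(\SSS)\}$ acts on the $q$-system obtained by vectorising. The construction is this. Fix an $\Fq$-basis $A_1,\dots,A_{ns}$ of $C(\SSS)$ and let $G$ be the $n^2\times ns$ matrix over $\Fqs$ whose $j$-th column is $\mathrm{vec}(A_j)$. Then $D(\SSS)$ is the row space of $G$ over $\Fqs$. The $q$-system is $U=\langle \mathrm{vec}(A_j)\rangle_{\Fq}=\mathrm{vec}(C(\SSS))\subseteq (\Fqs)^{n^2}$. We will show that the $q$-systems $U$ and $U'=\mathrm{vec}(C(\SSS'))$ are equivalent under $\Gamma L(n^2,\Fqs)$. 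We will then check directly that this gives equivalence of the row spaces in the sense of the vector-code definition.

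The key steps are as follows.

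\textbf{Step 1.} Use the standard identity $\mathrm{vec}(XAY)=(Y^T\otimes X)\,\mathrm{vec}(A)$, and note that $\mathrm{vec}(A^\rho)=\mathrm{vec}(A)^\rho$ because $\rho$ acts entrywise. The matrix $M:=Y^T\otimes X\in GL(n^2,\Fqs)$ is invertible since $X$ and $Y$ are.

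\textbf{Step 2.} Since $\rho$ fixes $\Fq$ pointwise, the matrices $A_j'=XA_j^\rho Y$ form an $\Fq$-basis of $C(\SSS')$. This holds because $A\mapsto XA^\rho Y$ is an $\Fq$-linear bijection. With this basis the generator matrix for $D(\SSS')$ is $G'=MG^\rho$, and the choice of basis is immaterial by the remark preceding the lemma.

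\textbf{Step 3.} Compute the row spaces. Row operations by the invertible $\Fqs$-matrix $M$ do not change the row space, so the row space of $G'$ equals the row space of $G^\rho$. That row space is $\{v^\rho : v\in D(\SSS)\}$: since $\rho$ is a field automorphism, $(\sum_i \lambda_i g_i)^\rho=\sum_i\lambda_i^\rho g_i^\rho$, and $\lambda\mapsto\lambda^\rho$ is onto $\Fqs$. Hence $D(\SSS')=D(\SSS)^\rho$, which is an equivalence with $Q=I$.

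There is also the matter of an arbitrary basis choice for $C(\SSS')$. If one uses a different $\Fq$-basis, the generator matrix becomes $G'P$ with $P\in GL(ns,\Fq)$, and the row space is multiplied on the right by $P$. This is again an allowed equivalence, and it is exactly the freedom covered by the remark that different bases give equivalent codes.

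The only delicate point is to confirm that the Frobenius-type twist commutes with vectorisation and with taking $\Fqs$-spans. In particular we need $\rho$ to fix $\Fq$, so that $\Fq$-bases map to $\Fq$-bases. We also need $\rho$ to be applied to the codewords, not merely to the coefficients. Beyond that, the argument is bookkeeping.
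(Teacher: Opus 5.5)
Your proof is correct and follows the paper's argument: the identity $\mathrm{vec}(XA^\rho Y)=(Y^T\otimes X)\,\mathrm{vec}(A)^\rho$ gives $G'=(Y^T\otimes X)G^\rho$, and you also spell out why this matrix has row space $D(\SSS)^\rho$, a step the paper leaves implicit. One small correction: when $q$ is not prime, $\rho$ (for instance $x\mapsto x^p$) fixes $\Fq$ only setwise, so $A\mapsto XA^\rho Y$ is $\rho$-semilinear rather than $\Fq$-linear; it still sends $\Fq$-bases of $C(\SSS)$ to $\Fq$-bases of $C(\SSS')$, so the argument is unaffected.
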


\begin{proof}
    Suppose $C(\SSS')=XC(\SSS)^\rho Y$ for some $X,Y\in M_n(\Fqs)$ invertible. Choose an $\Fq$-basis $\{A_1,\ldots,A_{ns}\}$ of $C(\SSS)$, and form the $\Fq$-basis $\{XA_1^\rho Y,\ldots,XA_{ns}^\rho Y\}$ of $C(\SSS')$. Then $\mathrm{vec}(XA_i^\rho Y) = (Y^T\otimes X)\mathrm{vec}(A_i)^\rho$, and so the matrices $G(\SSS)$ and $G(\SSS')$ satisfy $G(\SSS') = (Y^T\otimes X)G(\SSS)^\rho$, giving that $D(\SSS)$ and $D(\SSS')$ are equivalent. 
\end{proof}

Note that the converse is certainly not true in general; two inequivalent spread sets can certainly give rise to equivalent vector rank-metric codes. However we can utilise the contrapositive of this lemma to act as an invariant of semifields.

\begin{corollary}\label{cor:vrm}
    If the vector rank-metric codes $D(\SSS)$ and $D(\SSS')$ arising from two semifields $\SSS$ and $\SSS$ are inequivalent, then the semifield spread sets $C(\SSS)$ and $C(\SSS')$ are inequivalent, and the semifields $\SSS$ and $\SSS'$ are not isotopic.
\end{corollary}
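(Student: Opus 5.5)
This is the contrapositive of the preceding lemma, combined with the standard fact that isotopism of semifields coincides with equivalence of their spread sets. The plan has two steps.

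First, suppose for contradiction that $C(\SSS)$ and $C(\SSS')$ are equivalent. Then there exist invertible $X,Y\in M_n(\Fqs)$ and an automorphism $\rho$ with $C(\SSS')=\{XA^\rho Y : A\in C(\SSS)\}$. The lemma then gives directly that $D(\SSS)$ and $D(\SSS')$ are equivalent. Concretely, the generator matrices satisfy $G(\SSS')=(Y^T\otimes X)G(\SSS)^\rho$ for compatible choices of bases. This contradicts the hypothesis, so the spread sets are inequivalent.

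One small point needs checking: $D(\SSS)$ is defined only up to the choice of $\Fq$-basis of $C(\SSS)$. We must make sure that ``inequivalent'' is a well-defined statement. A change of $\Fq$-basis multiplies $G$ on the right by an invertible matrix in $M_{ns}(\Fq)$, which is a vector-code equivalence by definition. So the equivalence class of $D(\SSS)$ is independent of that choice, and the contrapositive applies to any representatives.

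Second, suppose $\SSS$ and $\SSS'$ were isotopic, via additive bijections $F,G,H$ with $F(x)\star' G(y)=H(x\star y)$. Then the right-multiplication maps satisfy $R'_{G(y)}=H R_y F^{-1}$. Hence $C(\SSS')=H\,C(\SSS)\,F^{-1}$, which is an equivalence of spread sets, contradicting the first step.

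The one genuine subtlety, and the main obstacle, is that $F$ and $H$ must be $\Fqs$-semilinear, so that they act as $X$, $Y$ together with a single $\rho$ inside $M_n(\Fqs)$. For this I will invoke the well-known fact cited in the introduction: isotopisms preserve the right nucleus, so after identifying nuclei they are semilinear over $\Fqs$. Equivalently, isotopism coincides with equivalence of the spread sets as rank metric codes. Given that fact, the corollary follows immediately.
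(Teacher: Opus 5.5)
Your proposal is correct and follows the paper's route. The paper obtains the corollary as the contrapositive of the preceding lemma, together with the standard fact quoted in its introduction that isotopism of semifields coincides with equivalence of their spread sets in $M_n(\Fqs)$. Your additional checks make explicit what the paper leaves to the literature: that the equivalence class of $D(\SSS)$ does not depend on the chosen $\Fq$-basis, and that an isotopism gives $C(\SSS')=H\,C(\SSS)\,F^{-1}$ with $F,H$ semilinear. The precise reason for the semilinearity is that $F$ and $H$ conjugate the nucleus of $\SSS$ onto that of $\SSS'$. They therefore map its unique subfield of order $q^s$ onto the corresponding one, with a common automorphism $\rho$.
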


This approach was in part used in \cite{CaPoTr}, \cite{MaPoTr} to assist in the classification of semifields with $n=2,s=2,3$. There the methods were stated in terms of linear sets rather than $q$-systems, and certain geometric properties of the $q$-systems were used rather than the full equivalence of the vector rank-metric codes.





\section{Embeddings of matrix spaces}\label{sec:embed}

It is well-known that we can view matrices over a large field as larger matrices over a subfield; that is, we can create the following embedding.
\[
M_n(\Fqs)\hookrightarrow M_{ns}(\Fq)\hookrightarrow M_{ns}(\Fqm).
\]
The first embedding can be achieved by replacing elements of $\Fqs$ by their corresponding element of $M_s(\Fq)$ under a fixed regular representation, for example by mapping a primitive element $\theta$ to the companion matrix of its minimal polynomial. Concretely, let $\phi:\Fqs\rightarrow M_s(\Fq)$ be such a regular representation. Then we define $\overline{\phi}:M_n(\Fqs)\rightarrow M_{ns}(\Fq)$ by mapping the matrix $A = (a_{ij})$ to the block matrix $\overline{\phi}(A) := (\phi(a_{ij}))$. The second embedding is simply extending scalars to $\Fqm$, where $m$ is any positive integer.

\begin{definition}
    Let $U$ be an $\Fq$-subspace of $M_{ns}(\Fq)$. We define the {\it $m$-th embedded space} \[
    E_m(U)=\langle U\rangle_{\Fqm}.
    \]
    For an $\Fq$-subspace $C$ of $M_n(\Fqs)$, we define 
    \[
    E_m(C):= E_m(\overline{\phi}(C)).
    \]
\end{definition}

Let $\SSS$ be a semifield of order $q^{ns}$ with a nucleus of order a power of $q^s$, and $C(\SSS) \subset M_n(\Fqs)$ its spread set. The following is immediate from the definition of equivalence.

\begin{theorem}
    Suppose $\SSS$ and $\SSS'$ are isotopic. Then $E_m(C(\SSS))$ and $E_m(C(\SSS'))$ are equivalent for any $m$, and hence have the same rank distribution.
\end{theorem}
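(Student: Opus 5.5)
Isotopy of $\SSS$ and $\SSS'$ means equivalence of the spread sets, so there are invertible $X,Y\in M_n(\Fqs)$ and $\rho\in\mathrm{Aut}(\Fqs)$ with $C(\SSS')=\{XA^\rho Y:A\in C(\SSS)\}$. The plan is to transport this through both embeddings $M_n(\Fqs)\hookrightarrow M_{ns}(\Fq)\hookrightarrow M_{ns}(\Fqm)$ and show that it becomes an equivalence of the embedded spaces. Here equivalence in $M_{ns}(\Fqm)$ means left and right multiplication by invertible matrices, possibly composed with an automorphism.

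\textbf{Step 1: the map $\overline{\phi}$.} First, $\overline{\phi}$ is an injective $\Fq$-algebra homomorphism $M_n(\Fqs)\to M_{ns}(\Fq)$. This follows because $\phi$ is a ring homomorphism and block-matrix multiplication is compatible with entrywise application of $\phi$. Hence $\overline{\phi}(XA^\rho Y)=\overline{\phi}(X)\,\overline{\phi}(A^\rho)\,\overline{\phi}(Y)$, and $\overline{\phi}(X),\overline{\phi}(Y)$ are invertible with inverses $\overline{\phi}(X^{-1}),\overline{\phi}(Y^{-1})$.

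\textbf{Step 2: the automorphism $\rho$ (main obstacle).} We must handle $\overline{\phi}(A^\rho)$. The automorphism $\rho=\sigma^j$ is $\Fq$-linear on $\Fqs$, so relative to the $\Fq$-basis underlying $\phi$ it is given by an invertible matrix $P\in M_s(\Fq)$.

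I would first check the identity $\phi(a^\rho)=P\phi(a)P^{-1}$. Both sides represent multiplication by $a^\rho$: indeed $\rho\circ m_a\circ\rho^{-1}=m_{a^\rho}$ as $\Fq$-linear maps of $\Fqs$.

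Setting $\overline{P}=I_n\otimes P$ (block diagonal), this gives $\overline{\phi}(A^\rho)=\overline{P}\,\overline{\phi}(A)\,\overline{P}^{-1}$. Therefore
\[
\overline{\phi}(C(\SSS'))=X'\,\overline{\phi}(C(\SSS))\,Y', \qquad X'=\overline{\phi}(X)\overline{P},\quad Y'=\overline{P}^{-1}\overline{\phi}(Y),
\]
with $X',Y'\in GL_{ns}(\Fq)$. So the field automorphism is absorbed entirely into matrices over $\Fq$.

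\textbf{Step 3: extending scalars.} Since $X',Y'$ have entries in $\Fq\subseteq\Fqm$, the map $M\mapsto X'MY'$ is $\Fqm$-linear. Therefore
\[
E_m(C(\SSS'))=\langle X'\overline{\phi}(C(\SSS))Y'\rangle_{\Fqm}=X'\,E_m(C(\SSS))\,Y',
\]
which is an equivalence of matrix codes in $M_{ns}(\Fqm)$. It does not even need a field automorphism.

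\textbf{Conclusion.} Multiplying on either side by invertible matrices preserves rank. Hence this bijection between $E_m(C(\SSS))$ and $E_m(C(\SSS'))$ preserves the rank of every element, and the rank distributions coincide. The only non-formal point is Step 2, namely conjugating $\rho$ into $\phi$. If one prefers not to fix the regular representation carefully, an alternative is to allow an automorphism in the definition of equivalence on $M_{ns}(\Fqm)$; the argument above avoids that.
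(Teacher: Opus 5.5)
Your route is the natural unpacking of the paper's one-line justification (``immediate from the definition of equivalence''): push the equivalence through $\overline{\phi}$, then through extension of scalars. Steps 1 and 3 are correct.

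The gap is in Step 2, where you write $\rho=\sigma^j$ and treat $\rho$ as $\Fq$-linear. The paper's definition of equivalence allows an arbitrary field automorphism $\rho$ of $\Fqs$, which is natural since isotopisms are in general only additive. When $q=p^e$ with $e>1$, such a $\rho$ can act nontrivially on $\Fq$, e.g.\ $x\mapsto x^p$. Then the identity $\phi(a^\rho)=P\phi(a)P^{-1}$ fails for every $P\in GL_s(\Fq)$. Already for $\lambda\in\Fq$ with $\lambda^\rho\neq\lambda$, the left side is $\lambda^\rho I$ and the right side is $\lambda I$. So in that case two of your claims are unjustified: that $\rho$ is ``absorbed entirely into matrices over $\Fq$'', and that no automorphism is needed on $M_{ns}(\Fqm)$.

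The repair is short. Let $\{b_1,\ldots,b_s\}$ be the $\Fq$-basis underlying $\phi$, so that $\phi(a)$ is the matrix of $x\mapsto ax$. Let $P$ be the matrix whose $j$-th column is the coordinate vector of $\rho(b_j)$. Then $[\rho(x)]=P[x]^\rho$, and hence $\phi(a^\rho)=P\,\phi(a)^\rho P^{-1}$, where $\rho|_{\Fq}$ is applied entrywise. It follows that $\overline{\phi}(C(\SSS'))=X'\,\overline{\phi}(C(\SSS))^\rho\,Y'$, with $X'=\overline{\phi}(X)\overline{P}$ and $Y'=\overline{P}^{-1}\overline{\phi}(Y)$ as before.

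Next, extend $\rho|_{\Fq}$ to an automorphism $\tilde\rho$ of $\Fqm$, namely the same power of $x\mapsto x^p$. Since $\langle U^{\tilde\rho}\rangle_{\Fqm}=\langle U\rangle_{\Fqm}^{\tilde\rho}$ for $U\subseteq M_{ns}(\Fq)$, you get $E_m(C(\SSS'))=X'\,E_m(C(\SSS))^{\tilde\rho}\,Y'$. This is an equivalence in the paper's sense, and it preserves ranks because entrywise field automorphisms do.

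So the theorem stands, but your argument as written is complete only when $\rho$ fixes $\Fq$ pointwise. That includes the case $q$ prime, which covers the paper's computation with $q=2$.
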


In this way we can exploit ideas from (matrix) rank-metric codes which are not useful when studying $C(\SSS)$ directly. The penalty is that we must consider a larger space, which obviously comes with a computational cost.

\begin{definition}
    Let \(\SSS\) be a semifield of order \(q^{ns}\) with $C(\SSS)$ an $\Fq$-subspace of $M_n(\Fqs)$, and let \(m \in \mathbb{N}\). We define the {\it \(m\)-ranks of \(\SSS\)}, \(\mathrm{ranks}^{(m)}(\SSS)\), as the multiset,
    \[
    \mathrm{rank}^{(m)}(\SSS) \coloneqq \{ \rank(A) : A \in E_m(C(\SSS)) \}.
    \]
\end{definition}
We note that the \(1\)-ranks of a semifield is the same for all semifields, as each non-zero element of \(E_1(\SSS)\) has full rank. Hence $\mathrm{rank}^{(1)}(\SSS)= \{0^1,(ns)^{q^{ns}-1}\}$, where the exponents are the multiplicities.

\begin{corollary}\label{cor:embed}
     If $\mathrm{rank}^{(m)}(\SSS)\ne \mathrm{rank}^{(m)}(\SSS')$ for some $m$, then the semifield spread sets $C(\SSS)$ and $C(\SSS')$ are inequivalent, and the semifields $\SSS$ and $\SSS'$ are not isotopic.
\end{corollary}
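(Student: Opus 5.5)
The corollary is the contrapositive of the preceding theorem, so the plan is to argue by contraposition. Suppose $\SSS$ and $\SSS'$ are isotopic, equivalently $C(\SSS)$ and $C(\SSS')$ are equivalent. Then I will show that $\mathrm{rank}^{(m)}(\SSS)=\mathrm{rank}^{(m)}(\SSS')$ for every $m$.

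The first step is to make the theorem's ``immediate'' claim explicit. Write $C(\SSS')=\{XA^\rho Y : A\in C(\SSS)\}$ with $X,Y\in M_n(\Fqs)$ invertible and $\rho$ an automorphism of $\Fqs$. Since $\overline{\phi}$ is multiplicative, $\overline{\phi}(XA^\rho Y)=\overline{\phi}(X)\,\overline{\phi}(A^\rho)\,\overline{\phi}(Y)$, and $\overline{\phi}(X),\overline{\phi}(Y)$ are invertible in $M_{ns}(\Fq)$. The next point to check is that $\rho$ is realised over $\Fq$ by conjugation. Because $\rho$ is $\Fq$-linear on $\Fqs$, there is $P_\rho\in GL_s(\Fq)$ with $\phi(a^\rho)=P_\rho^{-1}\phi(a)P_\rho$ for all $a\in\Fqs$; the maps $\phi$ and $\phi\circ\rho$ are two regular representations of $\Fqs$ on $\Fq^s$, which are conjugate via the matrix of $\rho$ acting on $\Fqs\cong\Fq^s$. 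Hence $\overline{\phi}(A^\rho)=(I_n\otimes P_\rho)^{-1}\overline{\phi}(A)(I_n\otimes P_\rho)$. Together these give $\overline{\phi}(C(\SSS'))=X'\,\overline{\phi}(C(\SSS))\,Y'$ with $X',Y'\in GL_{ns}(\Fq)$.

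Extending scalars to $\Fqm$ is compatible with left and right multiplication by fixed matrices. So $E_m(C(\SSS'))=X'E_m(C(\SSS))Y'$, and $A\mapsto X'AY'$ is a bijection from $E_m(C(\SSS))$ to $E_m(C(\SSS'))$. This bijection preserves rank because $X'$ and $Y'$ are invertible over $\Fqm$. The rank multisets therefore coincide, which contradicts the hypothesis that $\mathrm{rank}^{(m)}(\SSS)\ne\mathrm{rank}^{(m)}(\SSS')$.

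The only step that needs genuine care is the treatment of $\rho$, namely showing that the entrywise field automorphism becomes an $\Fq$-linear conjugation after embedding. It must not be left as a semilinear twist, since such a twist would not extend to $\Fqm$-spans in a rank-preserving way. I would verify this by choosing $\phi$ as the matrix of multiplication maps in a fixed $\Fq$-basis of $\Fqs$ and taking $P_\rho$ to be the matrix of $\rho$ in that basis. The identity $\rho\circ\mu_a=\mu_{a^\rho}\circ\rho$, where $\mu_a$ denotes multiplication by $a$, then gives the required conjugation directly.
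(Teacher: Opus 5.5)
Your strategy is the paper's: the corollary is the contrapositive of the preceding theorem, which the paper calls immediate from the definition of equivalence. Your multiplicativity and scalar-extension steps are fine. The gap is the sentence ``Because $\rho$ is $\Fq$-linear on $\Fqs$''. The paper's definition allows $\rho$ to be any field automorphism of $\Fqs$, not only an element of $\mathrm{Gal}(\Fqs/\Fq)=\langle\sigma\rangle$. When $q=p^e$ with $e>1$, the restriction $\tau:=\rho|_{\Fq}$ can be nontrivial, and then $\rho$ is only $\tau$-semilinear. In that case no $P_\rho\in GL_s(\Fq)$ with $\phi(a^\rho)=P_\rho^{-1}\phi(a)P_\rho$ can exist: $\phi(a)=aI$ for $a\in\Fq$, so conjugation would force $a^\tau=a$. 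Writing $\rho$ in coordinates as $[x]\mapsto R[x]^\tau$, the identity $\rho\circ\mu_a=\mu_{a^\rho}\circ\rho$ only yields $\phi(a^\rho)=R\,\phi(a)^{\tau}R^{-1}$ for some $R\in GL_s(\Fq)$. Hence $\overline{\phi}(C(\SSS'))=X'\,\overline{\phi}(C(\SSS))^{\tau}\,Y'$, with a residual entrywise twist by $\tau$. As written, your argument therefore covers only $\rho$ fixing $\Fq$ pointwise. This includes the case $q$ prime, such as the paper's $q=2$ computation, but not the corollary as stated.

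The missing step is easy, and it contradicts your remark that a semilinear twist ``would not extend to $\Fqm$-spans in a rank-preserving way''. Extend $\tau$ to an automorphism $\tilde\tau$ of $\Fqm$; restriction $\mathrm{Aut}(\Fqm)\to\mathrm{Aut}(\Fq)$ is surjective, since $x\mapsto x^{p^k}$ works on both. For any $\Fq$-subspace $U$ of $M_{ns}(\Fq)$ one then has $\langle U^\tau\rangle_{\Fqm}=\langle U\rangle_{\Fqm}^{\tilde\tau}$, because $\sum_i\lambda_i u_i^\tau=\bigl(\sum_i\lambda_i^{\tilde\tau^{-1}}u_i\bigr)^{\tilde\tau}$ when the $u_i$ have entries in $\Fq$. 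Applying $\tilde\tau$ entrywise preserves rank. So $M\mapsto X'M^{\tilde\tau}Y'$ is a rank-preserving bijection $E_m(C(\SSS))\to E_m(C(\SSS'))$, and it is an equivalence of codes in $M_{ns}(\Fqm)$, exactly as the theorem asserts. With this step added, your proof is complete for arbitrary $q$.
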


\section{Classifying semifields}\label{sec:class}

We will use Corollaries \ref{cor:vrm} and \ref{cor:embed} in order to speed up the classification of semifields of order $q^{ns}$ with nucleus containing $\Fqs$. We will assume an equivalence testing algorithm, for example as in \cite{Rua2009}. Our advantage comes from reducing the number of times that this algorithm has to be performed.

Let us fix some $\theta$ such that $\Fqs=\Fq(\theta)$.  We form $ns$ sets $S_{i+js}$ of invertible matrices whose first row is $ \theta^ie_j$ for $i=0,\ldots,s-1$ and $j=1,\ldots,n$. Note that any semifield spread set in $M_n(\Fqs)$ must contain a basis containing precisely one element from each $S_{k}$.

For an $\Fq$-subspace $C$ of $M_n(\Fqs)$, we define $i(C)= (\rank^{(2)}(\overline{\phi}(C)),D(C))$, where $D(C)$ is a representative for the equivalence class of the vector rank-metric code obtained from $C$ through the process outlined in Section \ref{ssec:qsys}.

We proceed iteratively as in \cite{Rua2009}. Suppose we have fully classified up to equivalence all $k$-dimensional $\Fq$-subspaces of $M_n(\Fqs)$ spanned by elements of $S_1,\ldots,S_k$ in which every nonzero element is invertible; let $T_k$ denote a set of representatives. Let $T_{k+1}$ and $i_{k+1}$ be the empty set.

\begin{itemize}
    \item For $C'\in T_k$, $A\in S_{k+1}$, construct $C= \langle C',A\rangle_{\Fq}$. If every nonzero element of $C$ is invertible, continue.
\item Calculate $i(C)$. 
\begin{itemize}
    \item If $i(C)\notin i_{k+1}$, add $C$ to $T_{k+1}$ and $i(C)$ to $i_{k+1}$.
    \item If $i(C)\in i_{k+1}$, sequentially test if $C$ is equivalent to $C''$ for all $C''\in T_{k+1}$ such that $i(C'')=i(C)$. If each of these tests returns false, add $C$ to $T_{k+1}$.
\end{itemize} 
\end{itemize}
Repeat this until $k=ns$. We obtain $T_{ns}$ a complete set of representatives for the equivalence classes of MRD codes in $M_n(\Fqs)$, and hence a complete set of representatives for the isotopy classes of semifields of order $q^{ns}$ with right nucleus containing $\Fqs$.

\subsection{Classification for $q=2$, $n=2$, $s=4$}

We implemented the above procedure for $M_2(\FF_{2^4})$. 

\begin{theorem}
    The number of isotopy classes of finite semifields of order $2^8$ with right nucleus containing $\FF_{2^4}$, equivalently the number of equivalence classes of MRD codes in $M_2(\FF_{2^4})$ of minimum distance $2$, is 757. 
\end{theorem}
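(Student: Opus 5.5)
The statement is a computational classification, so the proof consists of correctly running the iterative procedure of Section~\ref{sec:class} with $q=2$, $n=2$, $s=4$, and checking that its output is a complete and irredundant set of representatives. We fix $\theta$ primitive in $\FF_{2^4}$, say a root of $x^4+x+1$, and take $\phi$ to be the regular representation that sends $\theta$ to its companion matrix. We then form the eight sets $S_1,\ldots,S_8$ of invertible $2\times 2$ matrices over $\FF_{16}$ whose first row is $\theta^i e_j$.

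Completeness rests on the observation preceding the algorithm. Any MRD code $C\subset M_2(\FF_{16})$ of $\FF_2$-dimension $8$ has first rows forming all of $\FF_{16}^2$, since the map $A\mapsto e_1A$ is injective on $C$ by invertibility. Hence $C$ has a basis $A_1,\ldots,A_8$ with $A_k\in S_k$. Every partial span $\langle A_1,\ldots,A_k\rangle$ is also a code in which each nonzero element is invertible. By induction we must then show that each such partial span is equivalent to something that is generated by extending an element of $T_{k-1}$ by some $A\in S_k$.

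This needs care. The equivalence used to reduce to the representative in $T_{k-1}$ must preserve the ``first-row'' normalisation, or else we must extend by all invertible matrices whose first row lies outside the current first-row span. The safest choice is to take, at step $k$, the representative $C'\in T_{k-1}$ and consider every $A$ whose first row is not in the first-row space of $C'$. We can then normalise by applying some $X$ fixing first rows, namely a lower-unitriangular-type left multiplication together with a right multiplication, so that $A\in S_k$. I would prove this normalisation lemma first.

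Irredundancy and correctness of the pruning follow from the invariants. By Corollaries~\ref{cor:vrm} and~\ref{cor:embed}, if $i(C)\neq i(C'')$ then $C$ and $C''$ are inequivalent. So a new invariant value certifies a new class, and a clash is resolved by an exact equivalence test as in \cite{Rua2009}. We compute $\rank^{(2)}$ by enumerating the $2^{16}$ elements of $E_2(C)\subset M_8(\FF_4)$. For $D(C)$, we compute a canonical form of the associated $\FF_{16}$-linear vector rank-metric code of length $8$ and dimension at most $4$ under $\Gamma L$.

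After reaching $k=8$, we count $|T_8|$ and report $757$. Finally, spread sets correspond to semifields with right nucleus containing $\FF_{16}$, and code equivalence corresponds to isotopism. These two facts translate the count.

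The main obstacle is the equivalence testing in the final layers, $k=7,8$. There the number of candidates is large, and the invariants may fail to separate many codes. I expect to supplement $i(C)$ with $\rank^{(m)}$ for $m=4$ or with finer invariants of $D(C)$ such as weight distributions. For the remaining collisions I expect to compute the automorphism groups, and then use the orbit–stabiliser count of semifields to cross-check the total against the known number of spread sets.
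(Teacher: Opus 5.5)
Your route is the paper's: the theorem comes purely from running the procedure of Section~\ref{sec:class} with $q=2$, $n=2$, $s=4$, pruning with $i(C)$ and settling collisions by exact equivalence tests. You are right that completeness needs care, but the normalisation lemma you intend to prove first is false.

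\textbf{Why the normalisation lemma fails.} A left factor with $e_1X=e_1$ does not move first rows at all, since the first row of $XB$ is $e_1B$. Worse, some extensions cannot be normalised by any equivalence. Take $\omega\in\FF_4\setminus\FF_2$. The space $E=\langle I,\omega I\rangle_{\FF_2}=\FF_4 I$ extends $\langle I\rangle$ by a matrix whose first row lies outside $\langle e_1\rangle_{\FF_2}$. Every image $XE^\rho Y=\FF_4XY$ has first-row space $\FF_4(e_1XY)$, which is closed under multiplication by $\omega$. The space $\langle e_1,\theta e_1\rangle_{\FF_2}$ is not closed under $\omega$, since that would force $\theta\in\FF_4$. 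So $E$ is equivalent to no space spanned by elements of $S_1\cup S_2$.

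More generally, write $w_1,\dots,w_{8}$ for the vectors $\theta^ie_j$ in the order indexing the $S_k$, and $W_k=\langle w_1,\dots,w_k\rangle_{\FF_2}$. An equivalence carrying $\langle A_1,\dots,A_k\rangle$ onto some $C'\in T_k$ need not carry $\langle A_1,\dots,A_{k+1}\rangle$ to a space with first-row space $W_{k+1}$. So an induction that follows the original flag of a given spread set is the wrong one.

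\textbf{The missing idea: transport the whole spread set and re-choose the next generator inside it.}
\begin{itemize}
\item Let $C$ be a spread set, and suppose $g(C)\supseteq C'$ for some equivalence $g$ and some $C'\in T_k$. For $k=1$ this is arranged by a left multiplication, since all one-dimensional such spaces are equivalent.
\item The map $B\mapsto e_1B$ is a bijection from $g(C)$ onto $\FF_{16}^2$, so $g(C)$ contains an $A$ with $e_1A=w_{k+1}$.
\item Then $A\in S_{k+1}$ and $A\notin C'$. Hence $\langle C',A\rangle\subseteq g(C)$ is one of the candidates the algorithm examines, and it passes the invertibility test.
\item It is therefore either added to $T_{k+1}$, or found equivalent via some $h$ to a member $C''\in T_{k+1}$; then $hg(C)\supseteq C''$.
\item At $k=8$ containment is equality, so $C$ is equivalent to a member of $T_8$.
\end{itemize}
Thus the search restricted to $S_{k+1}$ is already complete at the top level, which is all the count $757$ requires. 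This holds even though, by the example above, the intermediate $T_k$ need not represent every class of $k$-dimensional partial spread sets. Neither your normalisation lemma nor your enlarged search over all first rows outside $W_k$ is needed.

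Your further suggestions ($\rank^{(4)}$, automorphism groups, an orbit--stabiliser check) are optional extras the paper does not use. The last one would require an independently computed count of spread sets, not a known one.
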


We outline now to what extent the invariants introduced here speed up the calculation.

Amongst the $757$ classes there are
\begin{itemize}
    \item  $66$ different $2$-ranks;
    \item $17$ different equivalence classes of vector rank-metric codes;
    \item $378$ different pairs $i(C)$.
\end{itemize}

At the final step, there were $530873$ spread sets. In order to classify them without utilising the invariants, we would need to calculate roughly $2\times 10^8$ equivalence tests, assuming they are uniformly distributed. With a version of \cite{Rua2009} tailored to this case, such a computation would take about $50$ days.

Instead, we calculate $2\times 10^6$ invariants, and then many orders of magnitude fewer equivalence tests.

A direct equivalence test takes approx $0.021$ seconds; calculating the $2$-ranks takes approx $0.08$ seconds, and calculating the equivalence class of vector rank-metric codes takes approx $0.03$ seconds. Overall, this step of the calculation is performed approximately $20$ times faster using these invariants than without using them. As the invariants can be used at each step in creating $T_k$, we obtain significant time savings.

\section{Divisibility properties of embedded spaces}

In this section we will demonstrate some divisibility properties of the ranks of embedded spaces. We extend the embedding from the previous section by extending scalars from $\Fq$ to $\Fqm$ to $\FF_{q^{\mathrm{lcm}(m,s)}}$ to obtain the following.
\[
M_n(\Fqs)\hookrightarrow M_{ns}(\Fq)\hookrightarrow M_{ns}(\Fqm)\hookrightarrow M_{ns}(\FF_{q^{\mathrm{lcm}(m,s)}}).
\]
Let $\phi$ be the regular representation from the previous section. 
It is well-known that we can simultaneously diagonalise the elements of $\im(\phi)$ over $\Fqs$. Explicitly, let $Z$ be an invertible {\it Moore matrix}; that is, 
\[
Z = \npmatrix{v_1&v_2&\cdots&v_s\\v_1^\sigma&v_2^\sigma&\cdots&v_s^\sigma\\\vdots&\vdots&\ddots&\vdots\\v_1^{\sigma^{s-1}}&v_2^{\sigma^{s-1}}&\cdots&v_s^{\sigma^{s-1}}}
\]
for some appropriately chosen $\Fq$-basis $\{v_1,\ldots,v_s\}$ of $\Fqs$. Then 
\[
Z\phi(\alpha)Z^{-1} = \mathrm{diag}(\alpha,\alpha^q,\ldots,\alpha^{q^{s-1}}) =:D_{\alpha x}.
\]
Note that $Z\phi(\alpha)Z^{-1}$ is the {\it Dickson matrix} corresponding to the {\it linearised polynomial} $f_\alpha(x) = \alpha x$. See for example \cite{SheekeyMRDsurvey} for details.

We can then perform a similar transformation for the image of $\overline{\phi}$ by defining $\overline{Z}= I_n\otimes Z$, and noting that
\[
\overline{Z}
\overline{\phi}(A)\overline{Z}^{-1} = (D_{a_{ij}x}). 
\]
We can then permute rows and columns using an appropriate permutation matrix $P$ so that

\[
(P\overline{Z})
\overline{\phi}(A)(P\overline{Z})^{-1} = A\oplus A^\sigma\oplus\cdots\oplus A^{\sigma^{s-1}}=:\psi(A),
\]
where $A^{\sigma^i}$ is the matrix obtained by raising each entry of $A$ to the power of $q^i$. Note that this resulting matrix $\psi(A)$ is an element of $M_{ns}(\Fqs)$. 

Let $C$ be any $\Fq$-subspace of $M_n(\Fqs)$. Define $\overline{\phi}(C)$ and $\psi(C)$ in the natural way. Then the following is straightforward.

\begin{lemma}
    For any $\Fq$-subspace $C$ of $M_n(\Fqs)$, the following multiset equalities hold:
    \[
    \{s\cdot\rank(x):x\in C\}= \{\rank(x):x\in \overline{\phi}(C)\}= \{\rank(x):x \in \psi(C)\}.
    \]
\end{lemma}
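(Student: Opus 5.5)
The plan is to prove both equalities elementwise: for each fixed $A\in C$ we show $\rank(\overline{\phi}(A)) = s\cdot\rank(A)$ and $\rank(\psi(A))=\rank(\overline{\phi}(A))$. Since $A\mapsto\overline{\phi}(A)$ and $A\mapsto\psi(A)$ are bijections from $C$ onto $\overline{\phi}(C)$ and $\psi(C)$, these pointwise identities give the stated multiset equalities, multiplicities included.

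The second equality is the easier one. By construction, $\psi(A)=(P\overline{Z})\overline{\phi}(A)(P\overline{Z})^{-1}$, where $P\overline{Z}$ is invertible in $M_{ns}(\Fqs)$. Rank over $\Fq$ of a matrix with entries in $\Fq$ equals its rank over any extension field, since the rank is determined by the vanishing of minors. Therefore $\rank(\overline{\phi}(A))$ may be computed over $\Fqs$, and conjugation preserves it. This gives $\rank(\overline{\phi}(A))=\rank(\psi(A))$.

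For the first equality we use the block-diagonal form $\psi(A)=A\oplus A^\sigma\oplus\cdots\oplus A^{\sigma^{s-1}}$. The rank of a block-diagonal matrix is the sum of the ranks of its blocks, so $\rank(\psi(A))=\sum_{i=0}^{s-1}\rank(A^{\sigma^i})$. The remaining step, and the only one needing a small argument, is that $\rank(A^{\sigma^i})=\rank(A)$. This holds because $\sigma^i$ is a field automorphism of $\Fqs$ applied entrywise: it commutes with taking determinants, so a minor of $A^{\sigma^i}$ is the image under $\sigma^i$ of the corresponding minor of $A$, and it vanishes if and only if that minor does. Hence $\rank(\psi(A))=s\cdot\rank(A)$, which combined with the previous paragraph yields the lemma.

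An alternative route for the first equality avoids $\psi$. View $\overline{\phi}(A)$ as the $\Fq$-linear map $(\Fq)^{ns}\cong(\Fqs)^n\to(\Fqs)^n$ given by $A$. Its kernel is an $\Fqs$-subspace of dimension $n-\rank(A)$, so its $\Fq$-rank is $ns-s(n-\rank(A))=s\cdot\rank(A)$. I expect no genuine obstacle. The only point requiring care is the implicit claim that rank is invariant under field extension and under entrywise Frobenius, and both follow from the minor characterisation of rank.
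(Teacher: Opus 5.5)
Your proof is correct and follows the paper's argument: rank is preserved under conjugation by the invertible matrix $P\overline{Z}$, and $\rank(\psi(A))=s\cdot\rank(A)$ follows from the block-diagonal form together with the fact that entrywise Frobenius preserves rank. Your added justifications (that rank is unchanged by field extension, the bijection that preserves multiplicities, and the alternative kernel-dimension argument) are valid details that the paper leaves implicit.
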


\begin{proof}
Since $P$ and $\overline{Z}$ are invertible, it is clear that $\rank(\overline{\phi}(A))= \rank(\psi(A))$. From the direct sum expression for $\psi(A)$, it is clear that $\rank(\psi(A))=s\cdot \rank(A)$, since the operations $A\mapsto A^{\sigma}$ preserve the rank function. \end{proof}

We have defined new invariants for $\Fq$-subspaces of matrices by considering the $\Fqm$-span of the elements of $\overline{\phi}(A)$. The previous discussion allows us to calculate and analyse the ranks of these elements of $M_{ns}(\Fqm)$ directly from elements of $M_n(\FF_{q^{\mathrm{lcm}(m,s)}})$.

\begin{lemma}
    Let $C$ be an $\Fq$-subspace of $M_n(\Fqs)$, with $\Fq$-basis $\{A_1,\ldots,A_k\}$. Let $\alpha_1,\ldots,\alpha_k\in \Fqm$. Then
\[
\rank\left(\sum_{i=1}^k \alpha_i\overline{\phi}(A_i)\right) = \sum_{j=0}^{s-1}\rank\left( \sum_{i=1}^k \alpha_iA_i^{\sigma^j} \right)= \sum_{j=0}^{s-1}\rank\left( \sum_{i=1}^k \alpha_i^{q^{m-j}}A_i \right).
\]
    If $m$ divides $s$, then 
    \[
\rank\left(\sum_{i=1}^k \alpha_i\overline{\phi}(A_i)\right) = \frac{s}{m}\sum_{j=0}^{m-1}\rank\left( \sum_{i=1}^k \alpha_i^{q^j}A_i \right).
\]
In particular, each element of the $\Fqm$-span of $\overline{\phi}(C)$ has rank divisible by $s/m$.
\end{lemma}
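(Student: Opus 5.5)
We work over $\FF_{q^{\mathrm{lcm}(m,s)}}$, where both $\Fqm$ and $\Fqs$ live, and conjugate by the matrix $P\overline{Z}$ from the preceding discussion. Since $P\overline{Z}$ is invertible, conjugating by it does not change rank. It is also $\Fqm$-linear, since scalars commute with matrix multiplication. So
\[
(P\overline{Z})\Big(\sum_i \alpha_i\overline{\phi}(A_i)\Big)(P\overline{Z})^{-1}=\sum_i\alpha_i\psi(A_i)=\bigoplus_{j=0}^{s-1}\Big(\sum_i\alpha_iA_i^{\sigma^j}\Big).
\]
The key point is that $\overline{\phi}(A_i)$ has entries in $\Fq$, so the $\Fqm$-scalars $\alpha_i$ pass through the conjugation unchanged. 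The rank of a block-diagonal matrix is the sum of the ranks of its blocks, which gives the first equality.

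For the second equality, we apply the automorphism $x\mapsto x^{q^{-j}}$ of $\FF_{q^{\mathrm{lcm}(m,s)}}$ entrywise to the $j$-th block. Field automorphisms preserve rank, since they preserve the vanishing of minors. Because the $A_i$ have entries in $\Fqs$, the block $A_i^{\sigma^j}$ returns to $A_i$. Because $\alpha_i\in\Fqm$, the scalar $\alpha_i$ becomes $\alpha_i^{q^{-j}}=\alpha_i^{q^{m-j}}$. Hence
\[
\rank\Big(\sum_i\alpha_iA_i^{\sigma^j}\Big)=\rank\Big(\sum_i\alpha_i^{q^{m-j}}A_i\Big).
\]
The only care needed here is that the exponent is interpreted modulo $m$ on $\Fqm$, and this is harmless.

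Now suppose $m\mid s$. The map $j\mapsto m-j \pmod m$ sends $\{0,\ldots,s-1\}$ onto $\mathbb{Z}/m\mathbb{Z}$, and each residue class is hit exactly $s/m$ times. The term with index $j$ depends only on $q^{m-j}$ acting on $\Fqm$, that is, only on $j \bmod m$. Regrouping the sum by residue classes therefore gives
\[
\frac{s}{m}\sum_{j=0}^{m-1}\rank\Big(\sum_i\alpha_i^{q^j}A_i\Big).
\]
Every element of $E_m(C)$ has the form $\sum_i\alpha_i\overline{\phi}(A_i)$, so the divisibility of every rank by $s/m$ follows immediately.

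None of these steps is a real obstacle. The one point to verify carefully is the identity $P\overline{Z}\,\overline{\phi}(A)(P\overline{Z})^{-1}=\psi(A)$ stated before the lemma. It requires the Moore matrix $Z$ to diagonalise $\phi(\alpha)$ as $\mathrm{diag}(\alpha,\alpha^q,\ldots)$, and this depends on the chosen basis being compatible with $\phi$. We take this identity from the preceding discussion.
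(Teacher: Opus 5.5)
Your proof is correct and follows essentially the same route as the paper. You conjugate by $P\overline{Z}$ to reach the block-diagonal $\psi$-form, apply a power of Frobenius entrywise to each block (the paper applies $\sigma^j$ to $\sum_i\alpha_i^{q^{m-j}}A_i$, while you apply its inverse to the $j$-th block), and regroup the indices modulo $m$ when $m\mid s$; your remark that the exponent $m-j$ must be read modulo $m$ when $j>m$ is a point the paper leaves implicit.
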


\begin{proof}
    The first equality holds simply by noting again that conjugating by $P\overline{Z}$ does not change the rank of a matrix, regardless of in which field the entries of the matrix lie. The second equality follows from the observation that for any $\alpha_i\in\Fqm$ it holds that $(\sum \alpha_i^{q^{m-j}}A_i)^{\sigma^j}= \sum\alpha_i A_i^{\sigma^m}$, and hence the ranks of $\sum \alpha_i^{q^{m-j}} A_i$ and $\sum\alpha_i A_i^{\sigma^m}$ are equal. The final equation then follows from the fact that $\alpha_i^{q^{rm}}= \alpha_i$ for each $i$ and each integer $r$, and a reindexing.
\end{proof}

\begin{example}
    In the computation of Section \ref{sec:class}, the $2$-ranks consist of even numbers, since $s/m=4/2=2$. In fact, we observe that the ranks of the nonzero elements of $E_2(C(\SSS))$ are always $4,6,$ or $8$. There exist some semifields where $E_2(C(\SSS))$ has no elements of rank $6$, and some where $E_2(C(\SSS))$ has no elements of rank $4$, but none where all nonzero elements have rank $8$.
\end{example}







\bibliographystyle{abbrv}
\bibliography{waifi_js.bib}

\end{document}